\DeclareMathOperator*{\argmin}{arg\,min}
\newcommand{\calD}{\mathcal{D}}
\newcommand{\calG}{\mathcal{G}}
\newcommand{\calR}{\mathcal{R}}
\newcommand{\calP}{\mathcal{P}}
\newcommand{\R}{\mathbb{R}}
\newcommand{\cc}{\mathrm{c}}
\newcommand{\wass}{{{\mathcal W}_p}}
\newcommand{\pG}{\mathcal{P_G}}
\newcommand{\rd}{\mathrm{d}}
\theoremstyle{plain}
\newtheorem{theorem}{Theorem}[section]
\theoremstyle{definition}
\newtheorem{definition}[theorem]{Definition}
\theoremstyle{remark}
\title{Least-Squares Problem Over Probability Measure Space}
\author[1]{Qin Li}
\author[2]{Li Wang}
\author[3]{Yunan Yang}
\affil[1]{Department of Mathematics, University of Wisconsin--Madison}
\affil[2]{School of Mathematics, University of Minnesota Twin Cities}
\affil[3]{Department of Mathematics, Cornell University}
\date{}                     %
\begin{document}
\maketitle

\begin{abstract}
In this work, we investigate the variational problem 
\[
\rho_x^\ast = \argmin_{\rho_x} \calD(\calG_\#\rho_x, \rho_y)\,,
\]
where \(\calD\) quantifies the difference between two probability measures, and $\calG$ is a forward operator that maps a variable $x$ to $y=\calG(x)$. This problem can be regarded as an analogue of its counterpart in linear spaces (e.g., Euclidean spaces), $\argmin_x \|\calG(x) - y\|^2$. Similar to how the choice of norm \(\|\cdot\|\) influences the optimizer in \(\R^d\) or other linear spaces, the minimizer in the probabilistic variational problem also depends on the choice of \(\calD\). Our findings reveal that using a \(\phi\)-divergence for \(\calD\) leads to the recovery of a conditional distribution of \(\rho_y\), while employing the Wasserstein distance results in the recovery of a marginal distribution.

\end{abstract}

\section{Introduction}
We study the variational problem  
\begin{equation}\label{eqn:main_problem}  
\rho_x^\ast = \argmin_{\rho_x} \calD(\calG\#\rho_x, \rho_y)\,,\footnote{Since $(\calP(\mathbb{R}^m)$ is an infinite-dimensional space, the variational problem may not always have a solution. Throughout this note, we assume the problem is well-posed and that a solution exists.}
\end{equation}  
where \(\rho_x \in \calP(\Theta)\) is a probability measure over the variable \(x \in \Theta \subseteq \mathbb{R}^m\), with \(\calP(\Theta)\) denoting the collection of all probability measures on the domain \(\Theta\). The given data \(\rho_y \in \calP(\mathbb{R}^n)\) is a probability measure over the variable \(y \in \mathbb{R}^n\). The map \(\calG\#\) represents the push-forward operator, which maps a probability measure in \(\calP(\Theta)\) to a measure in \(\calP(\mathbb{R}^n)\) via the forward map: \(\calG : \Theta \subseteq \mathbb{R}^m \to \mathbb{R}^n\).  

This problem seeks a probability measure \(\rho_x\) such that, when pushed forward by \(\calG\), it matches \(\rho_y\) in an optimal sense. The notion of optimality is defined by a metric or divergence function \(\calD\).

Let \(\calR = \calG(\Theta) \subseteq \R^n\) denote the range of the map \(\calG\) over the domain \(\Theta\). In the case where \(\text{supp}(\rho_y) \subseteq \calR\), there exists at least one \(\rho_x\) such that \(\min_{\rho_x} \calD(\calG\#\rho_x, \rho_y) = 0\). Specifically, one can construct \(\rho_x^\ast = \calG^{-1}\#\rho_y\), where \(\calG^{-1}\) is understood as a left-inverse of $\calG$ that maps any \(y \in \calR\) to a point in its preimage.

The focus of this note, however, lies in the case where \(\text{supp}(\rho_y) \varsubsetneqq \calR\), implying that \(\rho_y(\mathbb{R}^n \setminus \calR) \neq 0\). In this scenario, no measure \(\rho_x\) satisfies \(\calG\#\rho_x = \rho_y\). Intuitively, this reflects the fact that \(\rho_y\) possesses a non-trivial mass outside the range \(\calR\), making a perfect match unattainable. Consequently, \(\min_{\rho_x} \calD(\calG\#\rho_x, \rho_y) > 0\).

This problem can be interpreted as an analogue of $\min_x \|\calG(x) - y\|^2$, the standard least-squares minimization, commonly posed in linear vector spaces such as Euclidean space \(\mathbb{R}^d\) or Hilbert function spaces \(L^2(\mathbb{R}^d)\). It is well known that different choices of the norm \(\|\cdot\|\) in this optimization problem emphasize different properties and, consequently, yield different minimizers.

In this note, we establish analogous results for Problem~\eqref{eqn:main_problem} posed over the space of probability distributions \(\calP(\Theta)\). Specifically, under mild conditions:  
\begin{itemize}
    \item For any choice of \(\phi\)-divergence, where \(\phi\) is convex, the reconstructed \(\calG\#\rho_x^\ast\) recovers the conditional distribution of \(\rho_y\) on \(\calR\).  
    \item For any choice of \(W_p\) (\(p \geq 1\)), the \(p\)-Wasserstein distance, \(\calG\#\rho_x^\ast\) recovers the marginal distribution of \(\rho_y\) projected onto \(\calR\).  
\end{itemize}

The above statement is not yet rigorous. The use of the terms ``conditional'' and ``marginal'' is intended for intuition and will be made precise later. When $\calG$ is linear and $\Theta$ is a linear space, \(\calR\) is also a linear space. In this case, conditional and marginal distributions are well-defined in the usual sense. However, when \(\calG\), and consequently \(\calR\), are nonlinear, more precise definitions are required.  Specifically:  
\begin{itemize}
    \item[--] The existence of the conditional distribution should be understood through the Theorem of Measure Disintegration.  
    \item[--] For the definition of the marginal distribution, we rely on a projection operator tailored to the context of nonlinear mappings.
\end{itemize}
We will provide a detailed discussion of these concepts in Sections~\ref{sec:f-div} and~\ref{sec:Wp}, respectively.

It is possible that these observations have appeared previously in the literature. However, to the best of the authors' knowledge, we have not found it well-documented. We welcome readers to provide suggestions or criticisms. We should also note that some of the results in simpler cases were reported in earlier work~\cite{li2024stochastic,li2023differential}. In particular, different recovery (conditional vs. marginal) for $\calG=\mathsf{A}$ as an overdetermined linear operator was reported in~\cite{li2024stochastic}. Furthermore, in~\cite{li2023differential}, the authors reported a gradient flow optimization algorithm using the kernel method when $\calD$ takes the form of Kullback--Leibler (KL) divergence.

In discussions with colleagues, we frequently encountered questions about how this formulation differs from Bayes' theorem and its related concepts in Bayesian inference and inverse problems. While we believe the differences are significant and self-evident, we dedicate Section~\ref{sec:bayes} to addressing this topic in detail. Readers already familiar with this distinction may skip this section without confusion. Section~\ref{sec:f-div} focuses on the general results for \(\phi\)-divergences, while Section~\ref{sec:Wp} addresses the counterpart for the Wasserstein distance. The proofs are concise enough to be included directly in the main text.

\section{Similarity and differences to Bayesian inversion}\label{sec:bayes}
Problem~\eqref{eqn:main_problem} is an optimization problem and should be viewed as a variant of the least-squares minimization problem  
\[
\min_x \|\calG(x) - y\|^2\,.
\]
It should not be interpreted as a formulation akin to Bayesian inference, which seeks a distribution that represents the probability that the unknown variable will be reconstructed. Although both formulations aim to find a probability distribution \(\rho_x \in \calP(\mathbb{R}^m)\) and solve an inverse problem based on given data, there are significant differences. Similar arguments have also been given in \cite{Estep}. From our perspective, the primary distinctions between these two types of inverse problems can be summarized in three key points:

\begin{itemize}
    \item The Bayesian formulation aims to reconstruct a single parameter \(x\). Since this reconstruction cannot be determined with certainty due to uncertainties in the data and/or models, a probabilistic argument is deployed as a necessarity to address these uncertainties.  

    In contrast, in Problem~\eqref{eqn:main_problem}, the unknown is the probability measure \(\rho_x\) itself, which can be determined with certainty if no noise exists in the given data distribution or model. Probabilistic argument is used simply due to the modeling assumption on $\rho_x$, not from data/model uncertainties. 
    
    \item In the Bayesian formulation, uncertainties arise from the forward map and/or measurement error. Notably, to construct the likelihood function -- one of the key components of the Bayesian framework -- explicit models for the uncertainties in the forward operator and the measurement error are required.  

    In contrast, we assume complete knowledge of the deterministic forward map \(\calG\) and no knowledge of the measurement error. If desired, one could frame a Bayesian inverse problem for Problem~\eqref{eqn:main_problem}, assuming uncertainties in \(\rho_y\). In this case, the unknown would become a probability measure over the space of probability measures, living in \(\calP(\calP(\Theta))\)~\cite{Marzouk}.
    
    \item The data \(y\) used in the Bayesian formulation typically consists of a finite number of realizations of the forward operator with measurement error, often denoted as \(\{y_k\}_{k=1}^N\), where each \(y_k \in \mathbb{R}^n\). According to Bayesian consistency theory~\cite{diaconis1986consistency}, as \(N \to \infty\) and more realizations of \(y\) are integrated, the posterior distribution \(\rho_x\) will converge to \(\delta_{x^\ast}\), the Dirac delta centered at the solution to the deterministic optimization problem \(\min_x \|\calG(x) - y\|^2\).  

    In contrast, the solution to Problem~\eqref{eqn:main_problem} will always remain a probability measure, since the primary source of randomness is inherent to \(x\), as captured by \(\rho_x\). This means that uncertainty is intrinsic: no matter how many realizations \(\{y_k\}\) are provided, the reconstruction does not collapse and remains a probability measure.
\end{itemize}
It is possible to interpret Bayes' formula as the solution to an optimization problem through the variational principle. Specifically, Bayes' formula can be recast as the solution to:
\[
    \rho_{\text{pos}} =  \argmin_{\rho_x } \bigg\{  \text{KL}(\rho_x || \rho_{\text{prior}}) + \int \phi(x,y) \rd\rho_x(x) \bigg\}\,,
\]
where $\rho_{\text{prior}}$ is the prior distribution, $\phi(x,y) = - \log L(y|x)$ is the negative log-likelihood function with \(L(y|x)\) being the likelihood that incorporates the forward map \(\calG\). \(\text{KL}\) denotes the KL divergence. The significance of this re-formulation is that it enables the Langevin Monte Carlo for Bayesian sampling. In contrast, our new formulation does not require any specific form for the energy functional, nor do we impose a rigid structure for \(\rho_x\).

\section{Reconstruction when $\calD$ is $\phi$-divergence}\label{sec:f-div}

This section presents results when \(\calD\) in~\eqref{eqn:main_problem} is the \(\phi\)-divergence. We rewrite~\eqref{eqn:main_problem} as follows:
\begin{equation}\label{eq:f_div_min}
\min_{\rho_x \in \calP(\Theta)} \calD_\phi (\calG{\#} \rho_x \| \rho_y)\,.
\end{equation}
The \(\phi\)-divergence between two probability measures \(P\) and \(Q\) is defined as
\[
D_\phi(P \| Q) = \int \phi\left(\frac{\rd P}{\rd Q}\right) \, \rd Q\,,
\]
where \(\frac{\rd P}{\rd Q}\) is the Radon–Nikodym derivative of \(P\) with respect to \(Q\), and \(\phi: \mathbb{R}^+ \to \mathbb{R}\) is a convex function such that \(\phi(1) = 0\). Common examples of \(\phi\)-divergences include the Kullback–Leibler (KL) divergence and the \(\chi^2\)-divergence.

Next, we define the conditional distribution of a measure \(\rho_y\) on the range \(\calR\).

\begin{definition}\label{def:conditional}
The conditional distribution of \(\rho_y\) on the range \(\calR\) is defined as follows:
\[
\rho^\cc_{y|\calR}(B) = \frac{\rho_y(B \cap \calR)}{\rho_y(\calR)}\,, \quad \forall\, \text{measurable set } B\,.
\]
Moreover, \(\rho_{y|\mathbb{R}^n \setminus \calR}^\cc\) is the conditional distribution of \(\rho_y\) on \(\mathbb{R}^n \setminus \calR\), defined in a similar manner.
\end{definition}

As a consequence, \(\rho^\cc_{y|\calR}\) is absolutely continuous with respect to \(\rho_y\), and we have:
\[
\frac{\rd\rho^\cc_{y|\calR}}{\rd\rho_{y}}(y) = \begin{cases}
\frac{1}{\rho_{y}(\calR)}\,, & \text{if } y \in \calR\,, \\
0\,, & \text{if } y \notin \calR\,.
\end{cases}
\]

We are now ready to present our first theorem:

\begin{theorem}\label{thm:phi_divergence}
Assume that the variational problem~\eqref{eq:f_div_min} admits a minimizer \(\rho_x^* \in \calP(\Theta)\). Then, we have
\[
\calG{\#} \rho_x^* = \rho^\cc_{y|\calR}\,.
\]
\end{theorem}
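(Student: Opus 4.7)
The plan is to reduce the minimization to a one-variable Jensen inequality. The critical observation is that for any $\rho_x\in\calP(\Theta)$, the pushforward $P:=\calG\#\rho_x$ is automatically supported on the range $\calR$, so at the level of output distributions the search is a search over probability measures on $\calR$; among such candidates I aim to show $\rho^\cc_{y|\calR}$ is the unique $\phi$-divergence minimizer against $\rho_y$.

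First I would fix an arbitrary $\rho_x$, set $P=\calG\#\rho_x$, and split
\[
D_\phi(P\|\rho_y) \;=\; \int_\calR \phi\!\left(\tfrac{\rd P}{\rd \rho_y}\right) \rd\rho_y \;+\; \phi(0)\,\rho_y(\calR^c),
\]
using that $P(\calR^c)=0$ forces $\rd P/\rd\rho_y = 0$ on $\calR^c$. Writing $g = \rd P/\rd\rho_y$ on $\calR$, note the normalization $\int_\calR g\,\rd\rho_y = P(\calR) = 1$. Applying Jensen's inequality to the convex function $\phi$ with respect to the probability measure $\rd\rho_y|_\calR / \rho_y(\calR)$ yields
\[
\int_\calR \phi(g)\,\rd\rho_y \;\geq\; \rho_y(\calR)\,\phi\!\left(\tfrac{1}{\rho_y(\calR)}\right).
\]
Combining this with the $\calR^c$ term gives $D_\phi(P\|\rho_y) \geq \rho_y(\calR)\,\phi(1/\rho_y(\calR)) + \phi(0)\,\rho_y(\calR^c)$, and a direct computation from Definition~\ref{def:conditional} shows this lower bound equals exactly $D_\phi(\rho^\cc_{y|\calR}\|\rho_y)$.

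To identify the minimizer, I would then invoke strict convexity of $\phi$ (standard for any non-degenerate $\phi$-divergence): equality in Jensen forces $g$ to be $\rho_y|_\calR$-a.e.\ constant, and the normalization constraint fixes $g\equiv 1/\rho_y(\calR)$, which is precisely the Radon--Nikodym derivative of $\rho^\cc_{y|\calR}$ with respect to $\rho_y$. Since the theorem assumes a minimizer $\rho_x^*$ exists, $\calG\#\rho_x^*$ must saturate the bound, so $\calG\#\rho_x^* = \rho^\cc_{y|\calR}$.

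The main obstacle, in my view, is bookkeeping for candidates $P$ that fail $P\ll\rho_y|_\calR$: for those the naive density integral above is ill-posed and one must pass to the extended definition of $\phi$-divergence via the recession function $\phi'(\infty)=\lim_{t\to\infty}\phi(t)/t$. I would verify that this extension only contributes a non-negative singular term and therefore cannot undercut the Jensen bound, so the conclusion is unaffected. For canonical choices such as KL and $\chi^2$ one has $\phi'(\infty)=+\infty$, and the issue evaporates because any such $P$ gives $D_\phi=+\infty$ automatically.
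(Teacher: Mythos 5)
Your core argument---splitting $D_\phi(P\|\rho_y)$ into the $\calR$ and $\calR^c$ contributions and applying Jensen's inequality against the normalized restriction $\rho_y|_\calR/\rho_y(\calR)$---is exactly the paper's argument; the paper dresses the conditioning step in the language of the Measure Disintegration Theorem applied to the indicator map of $\calR$, but the resulting decomposition $\rho_y = \rho_y(\calR)\,\rho^\cc_{y|\calR} + \rho_y(\calR^c)\,\rho^\cc_{y|\mathbb{R}^n\setminus\calR}$ is the same object you work with directly. In two respects you are more careful than the paper: your recession-function treatment of candidates $P\not\ll\rho_y$ is the correct general account (the paper dismisses these in a footnote), and your observation that identifying the minimizer requires strict convexity of $\phi$ in the equality case of Jensen is a point the paper glosses over---its proof only exhibits equality at $\rho^\cc_{y|\calR}$ and never rules out other feasible measures attaining the bound, even though the theorem asserts the conclusion for an arbitrary minimizer.

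There is, however, one genuine gap in your final inference. You argue: a minimizer $\rho_x^*$ exists, therefore $\calG\#\rho_x^*$ must saturate the Jensen lower bound. This does not follow from what you have established. You only proved the inclusion $\{\calG\#\rho_x : \rho_x\in\calP(\Theta)\}\subseteq\{P : P(\calR^c)=0\}$; your lower bound is an infimum over the larger set, and a minimizer over the smaller feasible set need not attain that infimum unless the bound's attainer $\rho^\cc_{y|\calR}$ actually lies in the image of the pushforward map. The paper closes this by proving the reverse inclusion: any $\rho_y'$ supported on $\calR$ equals $\calG\#(\calG^{-1}\#\rho_y')$, where $\calG^{-1}$ is a measurable section of $\calG$ sending each $y\in\calR$ to a point of its preimage. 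You need this attainability step---at minimum for the single measure $\rho^\cc_{y|\calR}$---before you can conclude that $\calG\#\rho_x^* = \rho^\cc_{y|\calR}$.
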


The proof of this theorem requires the use of the Measure Disintegration Theorem, which guarantees the existence of the conditional distribution. Specifically, for any given map \( T \) that maps from a probability space \( (X, \mathcal{B}_X, \mu) \) to a measurable space \( (Y, \mathcal{B}_Y) \), and defining \(\nu = T\# \mu\), the theorem states that for \(\nu\)-almost every \(y \in Y\), there exists a family of probability measures \(\{\mu_y : y \in Y\}\) on \( (X, \mathcal{B}_X) \) that satisfies: $\mu(B) = \int_Y \mu_y(B) \, \rd\nu(y)$ for any measurable set $B \in \mathcal{B}_X$. The collection \(\{\mu_y\}_y\) is called the disintegration of \(\mu\) with respect to \(T\). In our context, $X$ would be the whole of $\R^n$ and $\mu$ is our data $\rho_y$. We need to identify the appropriate measurable space and find the correct map \(T\), as detailed in the following proof.

\begin{proof}[Proof of Theorem~\ref{thm:phi_divergence}]
    Define a map \( T: \mathbb{R}^n \rightarrow \{0,1\} \) such that 
    \[
    z = T(y) = \begin{cases}
        1 \,, &y \in \calR\,,\\
        0 \,, & y \not\in \calR\,.\\
    \end{cases}
    \]
    By applying the Measure Disintegration Theorem~\cite{ambrosio2008gradient} to \( \rho_y \) based on the map \( T \), we obtain a discrete probability measure \( \nu \), with 
    \[
    \nu(1) = \rho_{y}(\calR), \quad \nu(0) = \rho_{y}(\mathbb{R}^n \setminus \calR),
    \]
    and the following disintegration of \( \rho_y \):
    \begin{equation}
       \rho_{y} = \nu(1) \, \rho_{y|\calR}^\cc + \nu(0)  \,  \rho_{y|\mathbb{R}^n \setminus \calR}^\cc\,,
    \end{equation}
    where \( \rho_{y|\calR}^\cc \) and \( \rho_{y|\mathbb{R}^n \setminus \calR}^\cc \) are the conditional distributions defined in Definition~\ref{def:conditional}. The Measure Disintegration Theorem further states that this disintegration is unique.
   
To show that \( \rho_{y|\calR}^\cc \) is the optimal solution, we rewrite the variational problem~\eqref{eq:f_div_min} as:
\begin{equation}\label{eq:f_div_min_2}
\min_{\rho_x \in \calP(\Theta)} \calD_\phi (\calG {\#} \rho_x || \rho_y) = \min_{\substack{\rho_y' \in \calP(\R^d) \\ \rho_y'({\R^n\setminus\calR})=0 }} \calD_\phi (\rho_y' || \rho_y)\,.
\end{equation}
This is true because $\{\calG \# \rho_x |\rho_x\in \calP(\Theta)\}   = \{\rho_y'\in \calP(\R^d): \rho_y' (\R^n \setminus \calR) = 0\}$. The ``$\subseteq$'' direction holds directly since $\calR = \calG(\Theta)$.  The ``$\supseteq$'' direction holds because, for a given $\rho_y'$, $\text{supp}(\rho_y')\subseteq \calR$ and by using the left inverse function of $\calG$, we can define a distribution $\rho_x' \in \calP(\Theta)$ such that $\calG \# \rho_x' = \rho_y'$.

Without loss of generality, we only examine $\rho_y'$ that is absolutely continuous with respect to $\rho_y$\footnote{Otherwise, $\calD_\phi(\rho_y' || \rho_y)$ achieves the maxmimum value, making such $\rho_y'$ irrelevant as we aim to find the minimum. The maximum value for this divergence is $\infty$ in the case of KL and $\chi^2$, and $1$ in the case of TV.}. By the definition of the \( \phi \)-divergence, we have
    \begin{eqnarray*}
    \calD_\phi(\rho_y' || \rho_y) &=& \int \phi\left(\frac{\rd\rho_y'}{\rd\rho_y}\right) \rd\rho_y\\
        &=& \nu(1) \int \phi\left(\frac{\rd\rho_y'}{\rd\rho_y}\right) \rd\rho_{y|\calR}^\cc +  \nu(0) \int \phi\left(\frac{\rd\rho_y'}{\rd\rho_y}\right) \rd\rho_{y|\mathbb{R}^n\setminus \calR}^\cc\,.
    \end{eqnarray*}
Since \( \rho_y'(\mathbb{R}^n \setminus \calR) = 0 \), the second term is a constant $\nu(0)\phi(0)$. Thus, we are left with:
\begin{eqnarray*}
    \calD_\phi(\rho_y' || \rho_y) &=& \nu(1) \int \phi\left(\frac{\rd\rho_y'}{\nu(1)  \rd\rho_{y|\calR}^\cc + \nu(0)  \rd\rho_{y|\mathbb{R}^n\setminus \calR}^\cc }\right) \rd\rho_{y|\calR}^\cc + \nu(0)\phi(0)\\
    &=& \nu(1) \int \phi\left(\frac{1}{\nu(1) }\frac{\rd\rho_y'}{ \rd\rho_{y|\calR}^\cc} \right) \rd\rho_{y|\calR}^\cc + \nu(0)\phi(0) \\
    &\geq &\nu(1) \phi\left(\frac{1}{\nu(1)}\right) + \nu(0)\phi(0)\,,
\end{eqnarray*} 
where in the last step we applied Jensen's inequality, leveraging the convexity of \( \phi \). The equality holds when \( \rho_y' = \rho_{y|\calR}^\cc \), completing the proof.
\end{proof}
Although the reconstructed $\calG\# \rho_x^*$ is expected to somewhat agree with $\rho_y$ on the range $\calR$, the fact that the mismatch between $\rho_y$ and  $\calG\# \rho_x^*$ on $\calR$ is merely a constant multiple is not entirely trivial. Jensen's inequality and the convexity of $\phi$ play a major role.

\section{Reconstruction when $\calD$ is $W_p$}\label{sec:Wp}

We now move on by setting \( \calD \) as a Wasserstein distance. This amounts to rewriting~\eqref{eqn:main_problem} as:
\begin{equation}\label{eq:W_min}
\inf_{\rho_x \in \calP(\Theta)} \wass (\calG{\#} \rho_x,  \rho_y^\delta),
\end{equation}
where \( \wass(\cdot, \cdot) \) is the \( p \)-Wasserstein distance. For any two probability measures \( \mu \) and \( \nu \), the \( p \)-Wasserstein distance is:
\begin{equation}\label{eq:wass_p}
\wass(\mu, \nu) = \left(\min_{\gamma \in \Gamma(\mu, \nu)} \int d^p(x, y) \,\mathrm{d}\gamma\right)^{1/p}\,, \quad p \geq 1\,,
\end{equation}
where \( d \) is a metric over \( \mathbb{R}^n \) and \( \Gamma(\mu,\nu) \) represents the set of all couplings between the two measures $\mu$ and $\nu$. The most common choice of $d$ is the Euclidean distance.

To characterize the minimizer of \eqref{eq:W_min}, we first define a projection operator \( \pG \) as follows. 
\begin{definition} \label{def:proj}
    Assume that the range \( \calR \) is compact. Define the projection operator \( \pG: \mathbb{R}^n \rightarrow \calR \) as 
    \[
    \pG(y^*) = \argmin_{y \in \calR} d(y,y^*)\,.
    \]
\end{definition}
\begin{theorem}
Assume \eqref{eq:W_min} admits a minimizer \( \rho_x^* \in \calP(\Theta) \), then we have
\[
\calG{\#} \rho_x^*  = \pG{\#} \rho_y\,.
\]
\end{theorem}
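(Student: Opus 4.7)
The plan is to mirror the structure of the proof of Theorem~\ref{thm:phi_divergence}: first reduce the infimum over $\rho_x \in \calP(\Theta)$ to an infimum over $\rho_y'$ supported on $\calR$, then show that the nearest-point projection $\pG \#\rho_y$ is optimal. Concretely, by the same argument used in~\eqref{eq:f_div_min_2} (namely, $\{\calG\#\rho_x : \rho_x \in \calP(\Theta)\}$ coincides with $\{\rho_y' \in \calP(\R^n) : \rho_y'(\R^n \setminus \calR) = 0\}$ via a left-inverse of $\calG$), problem~\eqref{eq:W_min} becomes
\[
\inf_{\rho_x \in \calP(\Theta)} \wass(\calG\#\rho_x, \rho_y) \;=\; \inf_{\rho_y' \in \calP(\calR)} \wass(\rho_y', \rho_y)\,.
\]

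Next, I would use a pointwise lower bound on the transport cost. For any $\rho_y' \in \calP(\calR)$ and any coupling $\gamma \in \Gamma(\rho_y, \rho_y')$, the second marginal is supported on $\calR$, so $\gamma$-almost surely $y' \in \calR$, which gives $d(y, y') \geq \min_{\tilde y \in \calR} d(y, \tilde y) = d(y, \pG(y))$. Raising to the $p$-th power and integrating,
\[
\int d^p(y, y') \, \rd\gamma(y, y') \;\geq\; \int d^p(y, \pG(y)) \, \rd\rho_y(y)\,.
\]
Since this holds for every admissible coupling, $\wass^p(\rho_y, \rho_y') \geq \int d^p(y, \pG(y)) \, \rd\rho_y(y)$, and the right-hand side is independent of $\rho_y'$. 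It remains to show the bound is attained: I would exhibit the coupling $\gamma^\star = (\mathrm{id} \times \pG)\#\rho_y$, which has first marginal $\rho_y$ and second marginal $\pG\#\rho_y \in \calP(\calR)$, and whose cost is exactly $\int d^p(y, \pG(y)) \, \rd\rho_y(y)$. This forces $\pG\#\rho_y$ to achieve the infimum, and therefore any minimizer $\rho_x^\ast$ satisfies $\calG\#\rho_x^\ast = \pG\#\rho_y$.

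The main obstacle is the measurability and well-definedness of $\pG$: compactness of $\calR$ ensures the $\argmin$ is nonempty for every $y \in \R^n$, but it need not be single-valued, so one must invoke a measurable selection theorem (e.g., Kuratowski--Ryll-Nardzewski) to pick a Borel measurable $\pG$, and verify that $(\mathrm{id} \times \pG)\#\rho_y$ is a genuine coupling. A secondary subtlety is uniqueness of the minimizer: when $\pG$ is not single-valued there may be many optimal $\rho_y'$, but all of them must concentrate on nearest-point images, so the conclusion should be read as ``any minimizer has this form for some measurable selection $\pG$.'' The $\phi$-divergence proof used convexity plus Jensen; here the analogous ingredient is simply the pointwise inequality $d(y,y') \geq d(y, \pG(y))$ for $y' \in \calR$, which is much cleaner and does not require any convexity assumption on the cost beyond $p \geq 1$.
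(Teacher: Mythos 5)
Your proof is correct and takes essentially the same route as the paper's: the pointwise bound $d(y,y')\ge d(y,\pG(y))$ for $y'\in\calR$, the explicit coupling $(\mathrm{id}\times\pG)\#\rho_y$ that attains it, and a left-inverse of $\calG$ to lift $\pG\#\rho_y$ back to some $\rho_x$. Your closing remarks on measurable selection for $\pG$ and on the uniqueness of the optimal $\rho_y'$ (needed to pass from ``$\pG\#\rho_y$ attains the infimum'' to ``every minimizer equals $\pG\#\rho_y$'') flag genuine gaps that the paper's own proof also leaves unaddressed.
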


\begin{proof}
    By the definition of the \( \wass \) distance, we have 
    \begin{align} \label{p1}
        \wass(\calG{\#} \rho_x, \rho_y)^p = \int d(\tilde{y}, y)^p \, \rd \pi(\tilde{y}, y)\,,
    \end{align}
    where \( \pi \in \Gamma(\calG{\#} \rho_x, \rho_y) \) is the optimal coupling between \( \calG{\#} \rho_x \) and \( \rho_y \). From Definition~\ref{def:proj}, we can deduce that 
    \begin{align}
    \int d(\tilde{y}, y)^p \, \rd \pi(\tilde{y}, y) &\geq 
    \int d ( \pG(y), y)^p \, \rd \pi(\tilde{y}, y) \nonumber 
    \\ 
    & =  \int d(z, y)^p \, \rd \gamma(z, y)\quad \text{where }  \gamma = (\pG \otimes \mathbb{I}) \# (\rho_y \otimes \rho_y) \nonumber \\
    & \geq \wass(\pG{\#} \rho_y, \rho_y)^p \,. \label{p2}
    \end{align}
    Considering \( \pG{\#} \rho_y \in \calP(\calR) \), we can construct at least one distribution \( \rho_x \) such that \( \calG{\#}  \rho_x = \pG{\#} \rho_y \) using a left-inverse function of $\calG$.
    Finally, combining~\eqref{p1} and~\eqref{p2} leads to the target result. 
\end{proof}

\bibliographystyle{alpha}
\bibliography{sample}

\end{document}